\newtheorem{theorem}{Theorem}[section]
\theoremstyle{remark}
\begin{document}
\title{A classical functional generalization of the first Barnes Lemma}

\author{{\sc Raffaele Marcovecchio}}
\address{Dipartimento di Ingegneria e Geologia\\
Universit\`a di Chieti-Pescara \\
Viale Pindaro 42 \\
65127 Pescara \\
Italy }

\email{raffaele.marcovecchio@unich.it}

\begin{abstract}
We give a brief account and a simpler proof of a contour integral formula 
for the Gauss hypergeometric function. Such formula is alternative to Barnes's
integral formula and generalizes the first Barnes Lemma.  
\end{abstract}
%%%%%%%%%%%%%%%%%%%%%%%%%%%%%%%%%%%%%

% AMS subject classifications (used in AMS journals)
   \subjclass[2010]{Primary 33C60; Secondary 33C05}

% AMS keywords (used in AMS journals)
   \keywords{Mellin-Barnes integrals, hypergeometric function.}

\maketitle
%%%%%%%%%%%%%%%%%%%%%%%%%%%%%%%%%%%%%
%%%%%%%%%%%%%%%%%%%%%%%%%%%%%%%%%%%%%
\section{Introduction}
The Gauss hypergeometric function (denoted by $F(a,b;c)$ throughout 
the present paper) has been deeply studied, and several integral 
representations can be found in books dealing with special functions 
(see e.g.~\cite[Sections 8.3, 8.8]{viola}). An important integral 
was discovered by Barnes (see formula (\ref{barnes}) below), 
who build an alternative theory of the function $F(a,b;c)$ based 
on such integral formula. One useful feature of formulas of the type 
(\ref{barnes}) relies in the possibility of applying the saddle point 
method to obtain a precise asymptotic estimate of the function involved 
(see the monography~\cite{pariskaminski}). Another interesting property 
of (\ref{barnes}) is that it possesses a wide range of extensions 
to generalized hypergeometric series (see~\cite[Sections 4.6, 4.7]{slater}).

The countour integral formula proved in the present paper is not new 
(see~\cite[Section 14.53]{whittakerwatson} and~\cite[formula (15.6.7)]{nist}).
However, we believe it is worth the present short note, because our proof 
appears to be simpler than that in~\cite{whittakerwatson}, 
and is independent of Barnes's integral formula (\ref{barnes}).
We remark that formula (\ref{maintheorem}) encompasses (and,
in the present note, relies on) the first Barnes lemma 
(see (\ref{firstbarnes}) below), whose proof in~\cite{bailey} 
is very similar to the proof of Barnes's integral 
formula (\ref{barnes}). Therefore our contribution allows one to use 
the residue theorem in the proof of the first Barnes Lemma only. After 
that, one can prove the contour integral formula (\ref{maintheorem}) 
as in the present paper, and finally combine the two results to prove 
(\ref{barnes}), with an argument similar 
to~\cite[Section 14.53]{whittakerwatson}, without applying the residue 
theorem a second time, as in~\cite{whittakerwatson} . Also, our argument 
is very simple but apparently has been generally overlooked in this context, 
and may have further applications.

The first Barnes lemma is often considered 
as an integral analogue of the Gauss summation formula
\begin{equation}			\label{gauss}
F(a,b;c;1)= \frac{\Gamma(c) \Gamma(c-a-b)}{\Gamma(c-a) \Gamma(c-b)}.
\end{equation}
In addition, formula (\ref{maintheorem}) can be seen as an integral 
analogue of the formula connecting the values of hypergeometric 
functions of $z$ and $1-z$ (see (\ref{zetatoneminus}) below), 
and this is precisely the context where (\ref{maintheorem}) 
is used in~\cite[Section 14.53]{whittakerwatson}. 
Let us also point out two formulas close to (\ref{maintheorem}):
the first one, obtained in 1939 by S.O. Rice for 
his function $H_n(\xi,p;v)={}_3 F_2 (-n,n+1,\xi;1,p;v)$ 
(see~\cite[Vol I, p.193]{bateman}), and the second one, usually used 
in the proof af the second Barnes lemma (see e.g.~\cite[p.43]{bailey}. 
We mention these formulas at the end of the present paper.
%%%%%%%%%%%%%%%%%%%%%%%%%%%%%%%%%%%%%
%%%%%%%%%%%%%%%%%%%%%%%%%%%%%%%%%%%%%
\section{The main result and  a few similar formulas}
We denote by $(\xi)_n$ the product $\xi(\xi+1)\cdots (\xi+n-1)$ 
for any complex number $\xi$ and for any $n=1,2,\dots$, and 
we put $(\xi)_0=1$. We say that $\xi$ is {\it admissible} if 
$\xi$ is not a negative integer nor $0$.

The Gauss hypergeometric function $F(a,b;c;z)$ is defined over the 
unit disc $|z|<1$ in the complex plane by the series 
\begin{equation}			\label{hyperdefinit}
F(a,b;c;z) = \sum_{k=0}^\infty \frac{(a)_k (b)_k }{k! (c)_k } z^k,
\end{equation}
where $a$, $b$ and $c$ are complex numbers and $c$ is admissible. 
Note that the series $F(a,b;c;z)$ may terminate: this happens 
when $a$ or $b$ are not admissible. In this case the function 
(\ref{hyperdefinit}) is a polynomial in $z$, and could be defined 
even if $c$ is not admissible, provided that $\min\{a,b\}\le c$.

Let $\Gamma(z)$ be the Euler gamma function, defined in the complex 
half-plane ${\rm Re }\, z>0$ by 
\[
\Gamma(z)=\int\limits_0^\infty t^{z-1} e^{-t} {\rm d} t,
\]
and extended to a meromorphic function in the complex plane, with 
simple poles at $z=-n$ with residue $\frac{(-1)^n}{n}$ ($n=0,1,2,\dots$), 
for example by splitting the integration path $(0,\infty)$ in the union 
of $(0,1)$ and $(1,\infty)$. Two main properties of the function $\Gamma(z)$ 
are important in the following: the Stirling formula 
\[
\log \Gamma(z) = (s-\frac{1}{2}) \log s -s +\frac{1}{2} \log (2\pi) + o(1), 
\]
valid for $|\arg z|<\pi-\delta $ for any $\delta>0$, and 
the functional equations
\[
\Gamma(z) \Gamma(1-z)= \frac{\pi z}{\sin \pi z}, \qquad \Gamma(z+1)=z\Gamma(z).
\]

The Barnes integral representation 
(see e.g.~\cite[Theorem 2.4.1]{andrewsaskeyroy}) 
of the function (\ref{hyperdefinit}) is given by
\begin{equation}		\label{barnes}
\frac{\Gamma(a) \Gamma(b)}{\Gamma(c)} F(a,b;c;z) = 
\frac{1}{2\pi i} \int\limits_{-i \infty}^{i \infty} 
		\frac{\Gamma(a+s) \Gamma(b+s) \Gamma(-s)}{\Gamma(c+s)} (-z)^s {\rm d} s,
\end{equation}
valid under the conditions that $|z|<1$, $z\not=0$ and $|\arg(-z)|<\pi$, 
and that $a$, $b$ and $c$ are admissible. The path $L$ of integration is 
curved, if necessary, in such a way that separates the poles $s=-a-n$ and 
$s=-b-n$ ($n=0,1,2,\dots$) at the left of $L$ from the poles $s=0,1,2,\dots$
at the right of $L$. In the sequel, we denote by $F(a,b;c;z)$ the analytic
function defined for $z\notin [1,\infty)$ either by the series (\ref{hyperdefinit}), if $|z|<1$, or by the integral (\ref{barnes}), 
if $z\notin [0,\infty)$. 

The first Barnes lemma (see e.g.~\cite[Theorem 2.4.2]{andrewsaskeyroy}) 
states that 
\begin{equation}		\label{firstbarnes}
\frac{1}{2\pi i} \int\limits_{-i \infty}^{i \infty} 
\Gamma(a+s) \Gamma(b+s) \Gamma(c-s) \Gamma(d-s) = 
\frac{\Gamma(a+c) \Gamma(a+d) \Gamma(b+c) \Gamma(b+d)}{\Gamma(a+b+c+d)},
\end{equation}
provided that $a+c$, $a+d$, $b+c$ and $b+d$ are admissible.

Using (\ref{barnes}) and (\ref{firstbarnes}) one can prove 
(see~\cite[Sect. 14.53]{whittakerwatson}) that
\begin{multline}		\label{zetatoneminus}
F(a,b;c;z) = 
\frac{\Gamma(c) \Gamma(c-a-b)}{\Gamma(c-a) \Gamma(c-b)} F(a,b;1+a+b-c;1-z) \\ +
\frac{\Gamma(c) \Gamma(a+b-c)}{\Gamma(a) \Gamma(b)} (1-z)^{c-a-b} 
													F(c-a,c-b;1+c-a-b;1-z)
\end{multline}
Using (\ref{firstbarnes}) we can prove an integral formula that 
encompasses (\ref{zetatoneminus}), which is a generalization of 
(\ref{gauss}). For this reason we named formula (\ref{maintheorem}) 
below a functional generalization of the first Barnes lemma.
\begin{theorem}\cite[Section 14.53]{whittakerwatson}
Let $a$, $b$, $c$ and $z$ be complex numbers such that $z\notin (-\infty,0]$, 
and that $a$, $c-a$, $b$, $c-b$ and $c$ are admissible. Then
\begin{equation}		\label{maintheorem}
\frac{\Gamma(a) \Gamma(c-a) \Gamma(b) \Gamma(c-b)}{\Gamma(c)} F(a,b;c;1-z)
= \frac{1}{2\pi i} \int\limits_{-i \infty}^{i \infty} 
	\Gamma(a+s) \Gamma(b+s) \Gamma(c-a-b-s) \Gamma(-s) z^s {\rm d} s,
\end{equation} 
where the integration path $L$ separates the poles $s=-a-n$ and $s=-b-n$ 
$(n=0,1,2,\dots)$ on the left of $L$ from the poles $s=n$ and $s=a+b-c+n$ 
$(n=0,1,2,\dots)$ on the right of $L$.
\end{theorem}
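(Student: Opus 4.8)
The plan is to expand the factor $z^s$ in the integrand of (\ref{maintheorem}) as a power series in $1-z$, integrate term by term, and recognise each resulting integral as an instance of the first Barnes lemma (\ref{firstbarnes}). First I would restrict attention to the disc $|1-z|<1$, where the binomial expansion
\[
z^s=\bigl(1-(1-z)\bigr)^s=\sum_{k=0}^\infty \frac{(-s)_k}{k!}\,(1-z)^k
\]
converges to the principal branch of $z^s$ (note that for $|1-z|<1$ one has ${\rm Re}\, z>0$, so no branch ambiguity arises). Substituting this into the right-hand side of (\ref{maintheorem}) and interchanging summation and integration produces a series whose $k$-th term is
\[
\frac{(1-z)^k}{k!}\cdot\frac{1}{2\pi i}\int\limits_L \Gamma(a+s)\Gamma(b+s)\Gamma(c-a-b-s)\Gamma(-s)\,(-s)_k\,{\rm d} s.
\]

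The key algebraic simplification is the identity $\Gamma(-s)(-s)_k=\Gamma(k-s)$, which turns the $k$-th integrand into $\Gamma(a+s)\Gamma(b+s)\Gamma(c-a-b-s)\Gamma(k-s)$. This is precisely of the shape to which (\ref{firstbarnes}) applies, with parameters $a$, $b$, $c-a-b$ and $k$. The zeros of $(-s)_k$ at $s=0,1,\dots,k-1$ cancel the corresponding poles of $\Gamma(-s)$, so that $\Gamma(k-s)$ has poles only at $s=k,k+1,\dots$, and the original path $L$ continues to separate the left poles from the right poles for every $k$. Evaluating by the first Barnes lemma and using $a+(c-a-b)=c-b$, $b+(c-a-b)=c-a$ and $a+b+(c-a-b)+k=c+k$ gives
\[
\frac{\Gamma(c-b)\,\Gamma(c-a)\,\Gamma(a+k)\,\Gamma(b+k)}{\Gamma(c+k)}.
\]
Writing $\Gamma(a+k)=(a)_k\Gamma(a)$ and similarly for $b$ and $c$, the series collapses to $\frac{\Gamma(a)\Gamma(b)\Gamma(c-a)\Gamma(c-b)}{\Gamma(c)}$ times the defining series (\ref{hyperdefinit}) of $F(a,b;c;1-z)$, which is the left-hand side of (\ref{maintheorem}).

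Two points require care. The admissibility hypotheses ensure that the arguments $c-a$, $c-b$, $a+k$ and $b+k$ produced by (\ref{firstbarnes}) are admissible for every $k\ge 0$ (if $a$ is admissible then so is $a+k$, since $a+k$ could be a non-positive integer only if $a$ were), so the lemma may legitimately be invoked in each term. The interchange of sum and integral I would justify by dominated convergence: on the vertical path $L$ the Stirling formula shows that $|\Gamma(a+s)\Gamma(b+s)\Gamma(c-a-b-s)\Gamma(-s)\,z^s|$ decays like $e^{-2\pi|{\rm Im}\, s|}$ up to a power of $|{\rm Im}\, s|$, so that for $z\notin(-\infty,0]$ (where $|\arg z|<\pi$) the integral is absolutely convergent; moreover the $k$-sum of the absolute integrals is finite for $|1-z|<1$ because, after evaluation, it is dominated by the convergent series for $F(a,b;c;1-z)$.

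Finally, I would remove the restriction $|1-z|<1$ by analytic continuation. Both sides of (\ref{maintheorem}) are holomorphic in $z$ on the cut plane $z\notin(-\infty,0]$: the right-hand side by the convergence estimate just described, and the left-hand side because $F(a,b;c;1-z)$ is holomorphic for $1-z\notin[1,\infty)$, i.e. for $z\notin(-\infty,0]$. Since they coincide on the open set $|1-z|<1$, they coincide throughout the connected domain by the identity theorem. I expect the only genuinely technical step to be the convergence and domination estimate; the heart of the argument—the collapse of $z^s$ against $\Gamma(-s)$ into $\Gamma(k-s)$ followed by a single application of the first Barnes lemma—is short, and is exactly what makes this proof independent of Barnes's formula (\ref{barnes}).
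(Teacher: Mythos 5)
Your overall route is essentially the paper's own: the paper also works in the disc $|1-z|<1$, reduces the right-hand side of (\ref{maintheorem}) to the integrals $\frac{1}{2\pi i}\int_L \Gamma(a+s)\Gamma(b+s)\Gamma(c-a-b-s)\Gamma(k-s)\,{\rm d}s$ via the identity $-s\,\Gamma(-s)=\Gamma(1-s)$ (your $(-s)_k\,\Gamma(-s)=\Gamma(k-s)$ is the iterated form of this), evaluates each of them by the first Barnes lemma (\ref{firstbarnes}) with exactly the parameter identifications you use, and finishes by analytic continuation. The only structural difference is how the coefficient integrals are extracted: the paper differentiates under the integral sign and compares all Taylor coefficients at $z=1$, justifying this by the analyticity of the Mellin--Barnes integral in $|\arg z|<2\pi$ cited from~\cite{pariskaminski}, whereas you expand $z^s$ into the binomial series in $1-z$ and interchange $\sum_k$ with $\int_L$.

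However, your justification of that interchange --- which you yourself identify as the only genuinely technical step --- contains a real flaw. You claim that the $k$-sum of the \emph{absolute} integrals is finite because, ``after evaluation, it is dominated by the convergent series for $F(a,b;c;1-z)$.'' Barnes's lemma evaluates $\int_L f_k(s)\,{\rm d}s$, not $\int_L |f_k(s)|\,|{\rm d}s|$; the value of a contour integral gives no upper bound whatsoever on the integral of the modulus of its integrand, so this argument is circular: it uses the evaluated terms of the series to justify the very interchange that produces them. The gap is repairable by a direct estimate. On the vertical parts of $L$, writing $\sigma={\rm Re}\,s$, one has $|\Gamma(k-s)|\le\Gamma(k-\sigma)$ for $k>\sigma$ (since $|\Gamma(x+iy)|\le\Gamma(x)$ for $x>0$, by the integral representation), while the remaining three factors decay like a power of $|{\rm Im}\,s|$ times $e^{-3\pi|{\rm Im}\,s|/2}$ by Stirling's formula; hence the $k$-th absolute integral is $O(\Gamma(k-\sigma))$ with an implied constant independent of $k$, and
\[
\sum_{k} \frac{|1-z|^k}{k!}\,\Gamma(k-\sigma) <\infty \qquad (|1-z|<1),
\]
because $\Gamma(k-\sigma)/k!\asymp k^{-\sigma-1}$. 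With this substitute, the rest of your argument --- the cancellation of the poles of $\Gamma(-s)$ by the zeros of $(-s)_k$ so that $L$ still separates the poles for every $k$, the admissibility of $c-a$, $c-b$, $a+k$, $b+k$, and the final continuation to $z\notin(-\infty,0]$ by the identity theorem --- is correct.
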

\begin{proof}
Suppose that $|1-z|<1$. For any $n=0,1,2,\dots$ we have
\[
(-1)^n \frac{{\rm d}^n}{{\rm d} z^n} F(a,b;c;1-z)  \Big|_{z=1} 
= \frac{(a)_n (b)_n}{(c)_n} 
= \frac{\Gamma(a+n)}{\Gamma(a)} \frac{\Gamma(b+n)}{\Gamma(b)} 
										\frac{\Gamma(c)}{\Gamma(c+n)}.
\]
The integral at the right-hand side of (\ref{maintheorem}) is an analytic 
function in the domain $|\arg z|<2 \pi$ (see~\cite[Lemma 2.4]{pariskaminski}),
which plainly contains the disc $|1-z|<1$. This implies that the derivative 
of the integral in (\ref{maintheorem}) with respect to $z$ equals 
\[
\frac{1}{2\pi i} \int\limits_{-i \infty}^{i \infty} 
	\Gamma(a+s) \Gamma(b+s) \Gamma(c-a-b-s) \Gamma(-s) s z^{s-1} {\rm d} s,
\]
this being an integral of the same type as in (\ref{maintheorem}), 
once it is noticed that $-s \Gamma(-s) = \Gamma(1-s)$, and after 
substituting the variable $s$ with $t$ by putting $s=1+t$, and then 
renaming $t$ with $s$. We thus have
\begin{multline*}
(-1)^n \frac{{\rm d}^n}{{\rm d} z^n}
				\frac{1}{2\pi i} \int\limits_{-i \infty}^{i \infty} 
				\Gamma(a+s) \Gamma(b+s) \Gamma(c-a-b-s) \Gamma(-s) z^s {\rm d} s
				\Big|_{z=1}		\\
= \frac{1}{2\pi i} \int\limits_{-i \infty}^{i \infty} 
				\Gamma(a+s) \Gamma(b+s) \Gamma(c-a-b-s) \Gamma(n-s) {\rm d} s
				\qquad (n=0,1,2,\dots).
\end{multline*}
By (\ref{firstbarnes}) the last integral equals
\[
\frac{\Gamma(c-a) \Gamma(c-b) \Gamma(a+n) \Gamma(b+n)}{\Gamma(c+n)},
\]
therefore (\ref{maintheorem}) is proved for $|1-z|<1$, because all 
the derivatives of both sides of (\ref{maintheorem}) coincide at $z=1$. 
By analytic continuation (\ref{maintheorem}) holds for $z\notin(-\infty,0]$.
\end{proof}
From (\ref{maintheorem}), using Stirling's formula, the residue theorem, and
changing $z$ into $1-z$, after a few simplifications one easily gets
(\ref{zetatoneminus}), very much as in the standard proofs of (\ref{barnes}) 
and (\ref{firstbarnes}). Of course, it is possible to go the other way, 
which is the usual proof of (\ref{maintheorem}).

Let us finish this short paper with two formulas formally close to
(\ref{maintheorem}): the first one (see~\cite[p.43]{bailey}) is
\begin{multline*}
\sum_{n=0}^\infty 
	\frac{(\alpha_1)_n (\alpha_2)_n (\alpha_3)_n }{n! (\beta_1)_n (\beta_2)_n} 
= \frac{\Gamma(\beta_1)}{\Gamma(\alpha_1) \Gamma(\beta_1-\alpha_1)
						\Gamma(\alpha_2) \Gamma(\beta_1-\alpha_2)} \\ \times
\frac{1}{2\pi i} \int\limits_{-i \infty}^{i \infty} 
	\Gamma(\alpha_1+s) \Gamma(\alpha_2+s) 
				\Gamma(\beta_1-\alpha_1-\alpha_2-s) \Gamma(-s) 
				F(\alpha_3,-s;\beta_2;1){\rm d} s,
\end{multline*}
and is used in the standard proof of the second Barnes lemma. As to the 
second one, let us consider (see~\cite[Vol I, p.193]{bateman}) the sequence 
of polynomials
\[
H_n(\xi,p;v)= \sum_{j=0}^n \frac{(-n)_j (n+1)_j (\xi)_j}{j!^2 (p)_j}\, v^j 
\qquad (n=0,1,2,\dots).
\]
Here $\xi$, $p$ and $v$ are complex numbers and $p+n+1$ is admissible. 
Then
\begin{multline*}
\Gamma(p-q) \Gamma(q) \Gamma(p-\xi) \Gamma(\xi) H_n(\xi,p;v) 	\\
= \frac{\Gamma(p)}{2\pi i} 
	\int\limits_{\sigma-i \infty}^{\sigma+i \infty} 
	\Gamma(s) \Gamma(q-s) \Gamma(\xi-s) \Gamma(p-q-\xi+s) H_n (s,q;v) {\rm d} s, 
\end{multline*}
where $0<{\rm Re}\, \sigma<{\rm Re}\, q$ 
and $0<{\rm Re} (\xi - \sigma) < {\rm Re} (q-p)$. It is worth noticing that 
the generating function of the sequence $H_n$ is
\[
\sum_{n=0}^\infty t^n H_n(\xi,p;v)= \frac{1}{1-t} F(\xi,1/2;p; -4vt (1-t)^{-2}).
\]
%%%%%%%%%%%%%%%%%%%%%%%%%%%%%%%%%%%%%%%%%%%%%%
%%%%%%%%%%%%%%%%%%%%%%%%%%%%%%%%%%%%%%%%%%%%%%
%%%%%%%%%%%%%%%%%%%%%%%%%%%%%%%%%%%%%%%%%%%%%%

%%%%%%%%%%%
\end{document}